%% 
%% Copyright 2007-2024 Elsevier Ltd
%% 
%% This file is part of the 'Elsarticle Bundle'.
%% ---------------------------------------------
%% 
%% It may be distributed under the conditions of the LaTeX Project Public
%% License, either version 1.3 of this license or (at your option) any
%% later version.  The latest version of this license is in
%%    http://www.latex-project.org/lppl.txt
%% and version 1.3 or later is part of all distributions of LaTeX
%% version 1999/12/01 or later.
%% 
%% The list of all files belonging to the 'Elsarticle Bundle' is
%% given in the file `manifest.txt'.
%% 
%% Template article for Elsevier's document class `elsarticle'
%% with numbered style bibliographic references
%% SP 2008/03/01
%% $Id: elsarticle-template-num.tex 249 2024-04-06 10:51:24Z rishi $
%%
\documentclass[preprint,12pt]{elsarticle}
\usepackage[colorlinks]{hyperref}
\usepackage[usenames,dvipsnames]{color}

\newtheorem{thm}{Theorem}

\newtheorem{rem}{Remark}
\newtheorem{coro}{Corollary}
\newtheorem{ex}{Example}

\newcommand{\F}{\mathbb{F}}
\newcommand{\Z}{\mathbb{Z}}

\newenvironment{proof}{\noindent\textbf{Proof.}\quad}
{\hspace{\stretch{1}}%
\rule{1ex}{1ex}\\}

\setcounter{page}{1}

%% Use the option review to obtain double line spacing
%% \documentclass[authoryear,preprint,review,12pt]{elsarticle}

%% Use the options 1p,twocolumn; 3p; 3p,twocolumn; 5p; or 5p,twocolumn
%% for a journal layout:
%% \documentclass[final,1p,times]{elsarticle}
%% \documentclass[final,1p,times,twocolumn]{elsarticle}
%% \documentclass[final,3p,times]{elsarticle}
%% \documentclass[final,3p,times,twocolumn]{elsarticle}
%% \documentclass[final,5p,times]{elsarticle}
%% \documentclass[final,5p,times,twocolumn]{elsarticle}

%% For including figures, graphicx.sty has been loaded in
%% elsarticle.cls. If you prefer to use the old commands
%% please give \usepackage{epsfig}

%% The amssymb package provides various useful mathematical symbols
\usepackage{amssymb}
%% The amsmath package provides various useful equation environments.
\usepackage{amsmath}
%% The amsthm package provides extended theorem environments
%% \usepackage{amsthm}

%% The lineno packages adds line numbers. Start line numbering with
%% \begin{linenumbers}, end it with \end{linenumbers}. Or switch it on
%% for the whole article with \linenumbers.
%% \usepackage{lineno}

\journal{}

\begin{document}

\begin{frontmatter}

%% Title, authors and addresses

%% use the tnoteref command within \title for footnotes;
%% use the tnotetext command for theassociated footnote;
%% use the fnref command within \author or \affiliation for footnotes;
%% use the fntext command for theassociated footnote;
%% use the corref command within \author for corresponding author footnotes;
%% use the cortext command for theassociated footnote;
%% use the ead command for the email address,
%% and the form \ead[url] for the home page:
%% \title{Title\tnoteref{label1}}
\tnotetext[label1]{The research is supported by the National Natural Science Foundation of China under Grant 12471490.}
\tnotetext[label2]{Minjia Shi and Lu Wang  are with the Key Laboratory of Intelligent Computing Signal
		Processing, Ministry of Education, School of Mathematical Sciences, Anhui
		University, Patrick Sol\'e is with I2M (Aix Marseille Univ, CNRS).}
\author{Minjia Shi\fnref{1}}
\ead{smjwcl.good@163.com}
\author{Lu Wang\fnref{1}}
\ead{wanglu5666@163.com}
\author{Patrick Sol\'e\fnref{2}}
\ead{ sole@enst.fr}

%% \ead[url]{home page}
%%\fntext[label1]{}
%%\fntext[label2]{}
%%\fntext[label3]{}
%% \cortext[cor1]{}
%% \affiliation{organization={},
%%             addressline={},
%%             city={},
%%             postcode={},
%%             state={},
%%             country={}}
%% \fntext[label3]{}

\title{The log-concavity of two graphical sequences\thanks{The research is supported by the National Natural Science Foundation of China under Grant 12471490.}}
\affiliation[1]{organization={School of Mathematical Sciences, Anhui
		University},
            city={Hefei},
             postcode={230601},
            state={Key Laboratory of Integrated Service Networks, Xidian University, Xi'an,
		710071},
            country={China}}

\affiliation[2]{organization={I2M (Aix Marseille Univ, CNRS)},%Department and Organization
            city={Marseilles},
            country={France}}

%% Abstract
\begin{abstract}
We show that the large Cartesian powers of any graph have log-concave valencies with respect to a fixed vertex.
The series of  valencies of distance regular graphs is well-known to be log-concave. Consequences for strongly regular graphs, two-weight codes, and completely regular codes are explored. By P-Q duality of association schemes the series of multiplicities of $Q$-polynomial association schemes is shown, under some assumption on the Krein parameters, to be log-concave.
\end{abstract}

%%Graphical abstract
%%\begin{graphicalabstract}
%\includegraphics{grabs}
%%\end{graphicalabstract}

%%Research highlights
%%\begin{highlights}
%%\item Research highlight 1
%%\item Research highlight 2
%%\end{highlights}

%% Keywords
\begin{keyword}
log-concavity \sep distance regular graphs \sep association schemes \sep valencies \sep multiplicities

\MSC[2020] Primary 05 E 30, 05 C 30 ; Secondary 94 B 25
%% PACS codes here, in the form: \PACS code \sep code

%% MSC codes here, in the form: \MSC code \sep code
%% or \MSC[2008] code \sep code (2000 is the default)

\end{keyword}

\end{frontmatter}

%% Add \usepackage{lineno} before \begin{document} and uncomment 
%% following line to enable line numbers
%% \linenumbers

%% main text
%%

%% Use \section commands to start a section
\section{Introduction}
\label{sec1}
Recall that a sequence $s_i$ of integers is {\em log-concave} if $ s_i^2\ge s_{i-1}s_{i+1}$ for all $i>1.$
Many classical sequences in enumerative combinatorics are log-concave \cite{Sta}. Log-concavity implies {\em unimodality} (i.e. the existence of an index $i$ before which $s_i$ is nondecreasing and after which it is nonincreasing) but the converse is not true. In this note, we will consider two sequences that occur naturally when studying a graph:
The multiplicities of its eigenvalues, and the number of vertices at a given distance from a fixed vertex. These two sequences are not interesting for general graphs. As a recent result of Tao and Vu  shows the spectrum of most large graphs is simple \cite{TV}. Of course, the constant sequence equal to one is log-concave. As for the second sequence some examples that are not log-concave will be given in Section \ref{sec3}. In particular, an example of a distance degree regular with non log-concave series of valencies is given there.

On the positive side we give a Cartesian product construction of a special class of distance degree regular graphs with a log-concave series of valencies. A special subclass of distance degree regular graphs is that of distance regular graphs \cite{BI,BCN}. In that class of graphs, the second sequence is that of valencies. This sequence has been known to be unimodal for a long time \cite{ TL}. 
The series of valencies is known  to be log-concave by \cite[p.205]{BI}. As a corollary we derive some bounds on the size of strongly regular graphs that seem to be new \cite{BM}. We also give a bound on the weight distributions of projective two-weight codes that does not hold for non-projective two-weight codes. By using the duality between $P$ and $Q$ association schemes we show that, under some assumption on the Krein array, the multiplicities of $Q$-polynomial association schemes form a log-concave sequence. Inasmuch as these multiplicities are dimensions of eigenspaces, this latter result concerns the first sequence mentioned above.

The material is arranged as follows. The next section collects the basic notions and notations needed for the rest of the paper. Section \ref{sec3} gives some examples of non log-concave sequences in general graphs. Section \ref{sec4}
contains the main results on distance regular graphs, strongly regular graphs and completely regular codes. Section \ref{sec5} concludes the article.

%% Labels are used to cross-reference an item using \ref command.

\section{Preliminary}
\label{sec2}
%% Use \subsection commands to start a subsection.
\subsection{Graphs}
\label{sec2.1}

All the graphs in this note are simple, connected, without loops or multiple edges.
The neighborhood $\Gamma(x)$ is the set of vertices connected to $x$.

The {\em degree} of a vertex $x$ is the size of $\Gamma(x)$.

A graph is {\em regular} if every vertex has the same degree. The $i$-neighborhood $\Gamma_i(x)$ is the set of vertices at geodetic distance $i$ to $x$.

The \emph{diameter} $d$ of the graph is the maximum $i$ such that for some vertex $x$ the set $\Gamma_i(x)$ is nonempty.

A graph is {\em distance degree regular} (DDR) if  $|\Gamma_i(x)|$ is independent of $x,$ for all $0\le i\le d.$

A graph is {\em distance regular} (DR) if for every two vertices $u$ and $v$ at distance $i$ from each other the values $b_i=| \Gamma_{i+1}(u)\cap \Gamma(v)|$, $c_i=| \Gamma_{i-1}(u)\cap \Gamma(v)|$ depend only on $i$ and do not depend on the choice of $u$ and $v$. It can be seen that every DR graph is DDR but not conversely.

In $\Gamma$ is DDR, the graphs $\Gamma_i$ are regular of degree $v_i$ and we will refer to the $v_i$s as the {\em valencies} of $\Gamma$; in the case that $\Gamma$ is DR, the sequence $\{b_0,\ldots, b_{\mathrm{d}-1}; c_1,\ldots, c_{\mathrm{d}}\}$ is called the \emph{intersection array} of the distance-regular graph.

A distance-regular graph of diameter $2$ is called a {\em strongly regular graph}. The parameters of a strongly regular graph are usually listed as the quadruple $(\nu,\kappa,\lambda,\mu)$, where $\nu$ is the number of vertices, $\kappa$ the degree, $\lambda$ is the number of common neighbors of a pair of connected vertices, $\mu$ the number of common neighbors of a pair of disconnected vertices.

The {\em spectrum} of a graph is the set of distinct eigenvalues of its adjacency matrix. It is denoted by $\{\theta_0^{m_0},\theta_1^{m_1},\ldots\}$, where $m_i$ stands for the multiplicity of the eigenvalue $\theta_i$.

The {\em Cartesian product $G\square H$} of graphs $G$ and $H$ is a graph such that:
the vertex set of $G\square H$ is the Cartesian product $V(G) \times V(H)$, and two vertices $(u,v)$ and $(u',v')$ are adjacent in $G\square H$ if and only if either $u = u'$ and $v$ is adjacent to $v'$ in $H$, or $v = v'$ and $u$ is adjacent to $u'$ in $G$. 

The {\em Cartesian power $G^{\square n} $} of a graph $G$ is defined by induction on the integer $n$ as $G^{\square n}=G^{\square {n-1}}\square G$ from $G^{\square 1}=G. $

We use the following notation for special graphs:\par

 $K_n$ for the complete graph on $n$ vertices;\par
$K_{n\times m}$ for the bipartite complete graph on $n\times m$ vertices;\par
$C_q$ for the cycle graph on $q$ vertices.

\subsection{Codes}\label{sec2.2}
A linear code of length $n$, dimension $k$, minimum distance $d$ over the finite field $\F_q,$ is called an  $[n,k]_q$ code. Its elements are called  {\em codewords}. The  {\em weight} of a codeword is the Hamming weight. The {\em weights} of the code are the weights achieved by its codewords.

A {\em two-weight code} of parameters $[n,k;w_1,w_2]_q$ is an $[n,k]_q$ linear code having two nonzero weights $w_1$ and $w_2$.

The duality is understood with respect to the standard inner product.

A {\em coset} of a linear code $C$ is any translate of $C$ by a constant vector.

A {\em coset leader} is any coset element that minimizes the weight.

The {\em weight of a coset} is the weight of any of its leaders.

The {\em coset graph} $\Gamma_C$ of a code $C$ is defined on the cosets of $C$, two cosets being connected if they differ by a coset of weight one.

\subsection{Association schemes}\label{sec2.3}
Let $X$ be a finite set, and $R=\{R_0,\dots,R_{d}\}$ denote a partition of $X\times X$ into $d+1$ relations. Write $A_i$ for the adjacency matrix of the graph $(X,R_i).$ The pair $(X,R)$ forms a symmetric association scheme with $d$ classes if the following three conditions hold
\begin{enumerate}

\item [(1)] $A_0=I$ the identity matrix,
\item [(2)] $A_i^t=A_i,$ for $0\le i \le d,$
\item [(3)] $A_iA_j=\sum_{k=0}^d p_{ij}^k A_k$ for some integers $p_{ij}^k$ (the intersection numbers).
\end{enumerate}
In the special case that $(X,R_1)$ is a distance regular graph, and the $R_i$'s are the distance relation in that graph, we have with the notation of \S \ref{sec2.1} the relations $b_i=p_{1,i}^i,\,c_p=p_{1,i+1}^i.$
The algebra spanned over the complex by the $A_i$'s
is called the {\em Bose-Mesner} algebra. It admits as primitive idempotents $E_i$ of rank $m_i$ for $i=0,\dots d.$ 
The {\em  Krein parameters} are then defined by the expansion
$$ E_i\circ E_j=\sum_{k=0}^d q_{ij}^k E_k,$$
where $\circ$ stands for entrywise product.
A scheme is {\em Q-polynomial} iff $q_{ij}^k=0$ holds whenever the triple of integers $(i,j,k)$ violates the triangle inequality.
Let $b_i^*=q_{1,i}^i,\,c_i^*=q_{1,i+1}^i.$
The {\em Krein array} is then defined as  $$\{b_0^*,\dots,b_{d-1}^*;c_0^*,\dots,c_{d-1}^*\}.$$

The $P-Q$ duality is then illustrated in the following table.

\begin{table}[h]
\renewcommand{\arraystretch}{2}
\setlength\tabcolsep{7pt}
\centering
\begin{tabular}{|c|c|c|c|c|}
\hline
P&$v_i$&$p_{ij}^k$&$b_i$&$c_i$\\
\hline
Q&$m_i$&$q_{ij}^k$&$b_i^*$&$c_i^*$\\
\hline
\end{tabular}
\end{table}
\section{General graphs}\label{sec3}
In this paragraph we consider a graph $\Gamma$ that may not be regular, and a distinguished vertex $x$ of that graph. Let $v_i=|\Gamma_i(x)|.$
{\thm \label{ex} There exists an infinite family of graphs of diameter $2$ with a vertex $x$ such that $v_1=2,$ and $v_2$ unbounded.}

\begin{proof}
Consider the graph formed by adding to the bipartite complete $K_{2\times n}$ a vertex $x$ and two edges connecting $x$ to the two vertices of the part of size $2$ of $K_{2\times n}.$ We have $v_1=2$ and
$v_2=n.$
\end{proof}

The example of Theorem \ref{ex} yields a non log-concave valency sequence $(1,2,n)$  as soon as $n>4.$ It can be extended easily to a sequence $(1,2,\dots,2,n)$ by inserting a path of even length in $K_{2\times n}$ instead of a path of length $2.$

 A natural candidate for more counterexamples is that of distance degree regular graphs \cite{DDR}, where the numbers $v_i$ defined here do not depend on the chosen $x.$ The so-called 
 TriangleReplacedPetersen Graph in Figure \ref{fig1} is DDR and has a non log-concave series of valencies since its starts $(1,3,4,6,\dots)$. And this graph is distance degree regular as it is vertex-transitive.\par
\begin{figure}[h]\label{fig1}
  \centering
  \includegraphics[height=4.5cm, width=9.5cm]{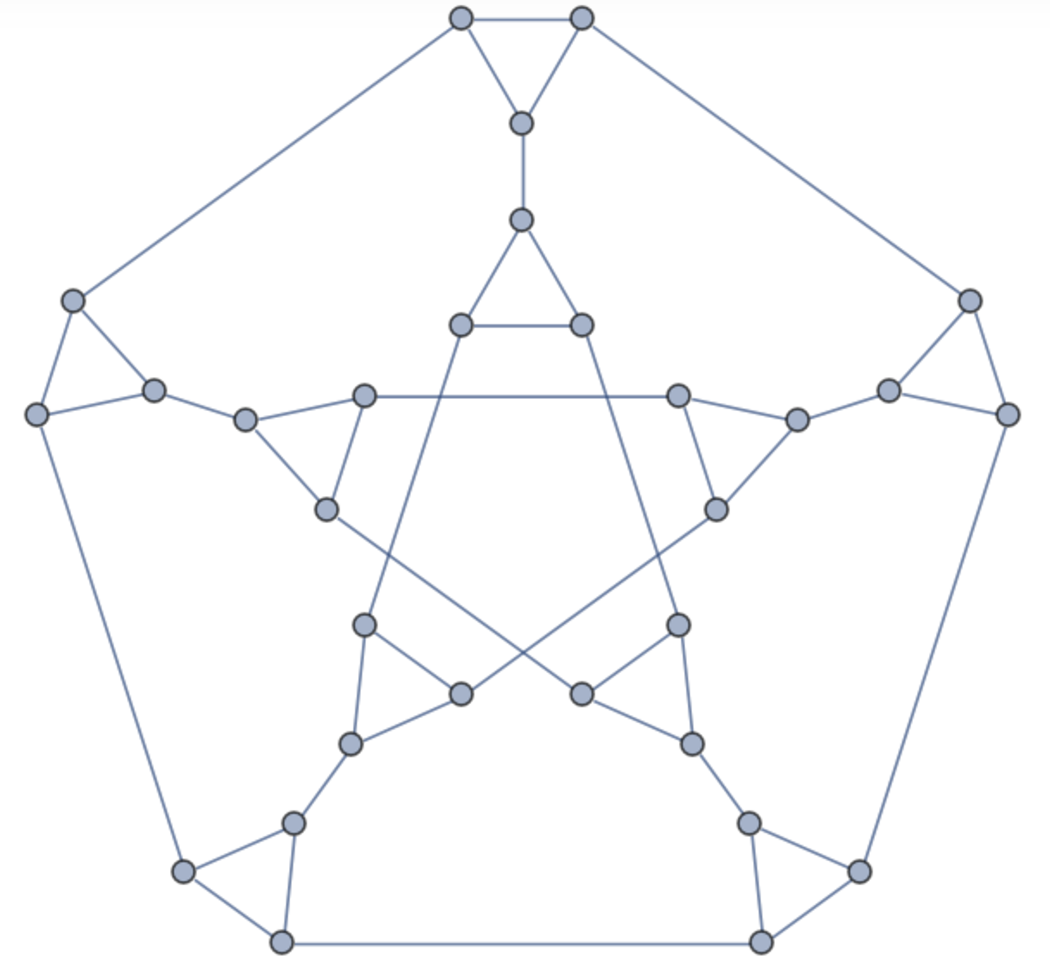}
  \caption{TriangleReplacedPetersen Graph}
  \label{fig1}
\end{figure}

 On a more positive side, we give a general construction of graphs where the valencies form a log-concave sequence.
Call $LC$ a graph $A$ with at least one vertex $x$ such that $a_i(x)=|A_i(x)|$ is log-concave. Then we have the following result.
\begin{thm}\label{LC} The Cartesian product of two $LC$ graphs is $LC$.  
\end{thm}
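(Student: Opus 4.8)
The plan is to use the additivity of geodetic distance across a Cartesian product, which realises the valency sequence of $A\square B$ as the convolution of the valency sequences of the two factors; log-concavity is then inherited from the classical fact that a convolution of log-concave sequences with no internal zeros is again log-concave.

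First I would recall that for vertices $(u,v),(u',v')$ of $A\square B$ the distance splits as $d_{A\square B}((u,v),(u',v'))=d_A(u,u')+d_B(v,v')$, since the two coordinates of any walk are independent walks in $A$ and $B$ and may be routed optimally and separately. Let $x$ be a vertex of $A$ and $y$ a vertex of $B$ realising the $LC$ property, so that $a_i:=|A_i(x)|$ and $b_j:=|B_j(y)|$ are log-concave. Fixing the base point $(x,y)$ and sorting the vertices $(u,v)$ at distance $k$ according to the value $i=d_A(x,u)$, additivity gives
\begin{equation*}
c_k:=|(A\square B)_k((x,y))|=\sum_{i+j=k}a_i\,b_j,
\end{equation*}
the Cauchy product of $(a_i)$ and $(b_j)$. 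Thus $A\square B$ is $LC$, witnessed by $(x,y)$, once we know the Cauchy product preserves log-concavity. Here connectedness is used twice: $a_i>0$ exactly for $0\le i\le\mathrm{ecc}(x)$ and $b_j>0$ exactly for $0\le j\le\mathrm{ecc}(y)$, so both sequences are positive on an interval and have \emph{no internal zeros}, which is the hypothesis under which the statement is true (see \cite{Sta}).

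For the convolution lemma itself I would argue directly. Splitting each product of two $c$'s into a double sum yields the identity
\begin{equation*}
c_k^2-c_{k-1}c_{k+1}=\sum_{u,w}a_{k-u}a_{k-w}\bigl(b_ub_w-b_{u-1}b_{w+1}\bigr).
\end{equation*}
The summand is not nonnegative term by term (for $(b_j)=(1,2,3,2,1)$ the pair $u=3,w=1$ gives $b_3b_1-b_2b_2=-5<0$), so the crux is to group the terms. Applying the involution $(u,w)\mapsto(w+1,u-1)$, which preserves $u+w$, and summing the identity with its image turns it into
\begin{equation*}
2\bigl(c_k^2-c_{k-1}c_{k+1}\bigr)=\sum_{u,w}\bigl(b_ub_w-b_{u-1}b_{w+1}\bigr)\bigl(a_{k-u}a_{k-w}-a_{k-u+1}a_{k-w-1}\bigr).
\end{equation*}

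The main obstacle is the sign bookkeeping that makes this last sum manifestly nonnegative. Writing log-concavity of a positive sequence as the statement that consecutive ratios are non-increasing, one checks that $b_ub_w-b_{u-1}b_{w+1}$ and the companion factor $a_{k-u}a_{k-w}-a_{k-u+1}a_{k-w-1}$ are both $\ge0$ when $u\le w+1$ and both $\le0$ when $u\ge w+1$; hence each summand, a product of two factors of equal sign, is nonnegative, giving $c_k^2\ge c_{k-1}c_{k+1}$. The one point demanding care is the behaviour of these two $2\times2$ expressions at the ends of the support, where the ratio argument must be replaced by the no-internal-zeros hypothesis. This is precisely where the positivity-on-an-interval assumption (and hence connectedness) enters, and it is the reason the naive term-by-term bound fails.
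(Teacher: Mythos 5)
Your proof is correct and takes essentially the same route as the paper: the paper writes the valency generating function identity $f_{A\square B}((x_1,x_2);t)=f_A(x_1;t)f_B(x_2;t)$ --- which is precisely your convolution identity $c_k=\sum_{i+j=k}a_ib_j$ stated multiplicatively --- and then invokes Stanley's Proposition 2 for the fact that a product of log-concave polynomials with nonnegative coefficients and no internal zeros is log-concave. The only difference is that where the paper cites this lemma, you prove it from scratch by the symmetrization/involution argument, and your treatment of the two delicate points (the termwise bound genuinely failing, and connectedness guaranteeing interval support so the ratio argument applies) is sound.
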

\begin{proof}Write $f_A(x;t)=\sum_{i=0}^d a_i(x)t ^i$. If $A$ and $B$ are $LC$ graphs then it can be shown that $f_{A\square B}((x_1,x_2);t)=f_A(x_1;t)f_B(x_2;t)$ by the definition of Cartesian product. Then the result follows by \cite[Prop. 2 ]{Sta}.
\end{proof}

This result can be strengthened if the graphs are DDR.

{\coro \label{DDR} The Cartesian product of two DDR $LC$ graphs is DDR $LC$.}

\begin{proof} In this case, because $A$ is DDR, we can write $f_A(x;t)=f_A(t),$
 a quantity independent of $x.$ The identity
 $f_{A\square B}((x_1,x_2);t)=f_A(t)f_B(t),$
 shows that $A\square B$ is DDR.
 If, furthermore, both $A$ and $B$ are LC, then 
 $A\square B$ is LC, by the same argument as in the proof of Theorem \ref{LC}.
\end{proof}

This yields a general construction.

{\coro \label{gen} If $A$ is DDR LC then $A^{\square n}$ is DDR LC for all integers $n.$}

\begin{proof}
 Proof by induction on $n,$ relying on Corollary \ref{DDR}, starting at $n=1.$
\end{proof}

\begin{ex}
We give some examples of constructions from Corollary \ref{gen}:
\begin{enumerate}
 \item[(1)] If $A=K_2,$ we obtain the graph of $H(n,2),$ also known as the hypercube, with valencies the binomial coefficients ${n \choose i}.$
 \item[(2)] If $A=K_q,$ for some integer $q,$ we obtain the graph of $H(n,q),$ the Hamming scheme, with valencies the binomial coefficients ${n \choose i}(q-1)^i.$
 \item[(3)] If $A=C_q,$ for some integer $q,$ we obtain
 the graph with geodetic distance the Lee metric on $\Z_q^n$  \cite{L}.
 There is no closed form expression for the $v_i$'s,
 as witness by the following generating functions \cite{B}:
 \noindent
 
  If $q=2s$ then $$f_{C_q^{\square n}}=(1+2\sum_{i=1}^{s-1} t^i+t^s)^n;$$
  if $q=2s+1$ then $$f_{C_q^{\square n}}=(1+2 \sum_{i=1}^s t^i)^n.$$

\end{enumerate}
\end{ex}
We give an asymptotic property that shows the power
of the Cartesian power construction.

{\thm For every graph $A$ there is an integer $n_0$
such that for $n>n_0,$ the graph $A^{\square n}$ is  LC.}

\begin{proof}
 Follows by the relation
 $$f_{A^{\square n}}(x;t)=f_A(x;t)^n,$$
 valid for any vertex of $A,$
 upon applying \cite[Theorem 1]{OR}.
\end{proof}
\section{Distance regular graphs}\label{sec4}
In this section, we focus on the class of distance regular graphs. The following result is contained in the proof of \cite[Prop. 1.4, p.205]{BI}.
{\thm (Bannai-Ito) \label{fonda} Let $\Gamma$ be a distance regular graph of diameter $d\ge 2$ with intersection array ${v_1,b_1,b_2,\dots,b_{d-1};1,c_2,\dots,c_d}.$ Then the sequence of its valencies $v_i$ is log-concave.}

\begin{proof}
By \cite[(2)]{TL} we know that $\frac{v_i}{v_{i-1}}=\frac{b_{i-1}}{c_i}.$ We need to check that $\frac{v_i}{v_{i-1}}\ge \frac{v_{i+1}}{v_i},$ which is therefore equivalent to $\frac{b_{i-1}}{c_i}\ge \frac{b_{i}}{c_{i+1}}.$
By \cite[Prop. 1 ]{TL} we know that the $b_i$'s are decreasing, and that the $c_i$'s are increasing. This implies the previous inequality upon quotienting $b_{i-1}\ge b_{i},$ by ${c_i}\le c_{i+1}.$
\end{proof}

\begin{ex}
We obtain the following log-concave sequences from the Hamming and Johnson graphs. Their log-concavity is well-known from statistics literature \cite{AP}.
%Their log-concavity is usually proved by analytic techniques \cite{Sta}.
%A combinatorial proof of a strengthening of \ref{sec3} below is in \cite{K}.
\begin{enumerate}
\item[(1)] the binomials $v_i={ n\choose i}$ from $\Gamma$ the graph of $H(n,2),$ also known as the hypercube,
\item[(2)] the weighted binomials $v_i={ n\choose i}(q-1)^i$ from $\Gamma$ the graph of $H(n,q),$
%\item[(3)] the Gaussian binomials $v_i={ n\brack i}$ from $\Gamma$ the bilinear forms graph \cite[p.280]{BCN},
\item[(3)] the hypergeometric distribution $v_i={ d\choose i}{ n-d\choose i}$ from $\Gamma$ the Johnson graph $J(n,d)$. 
\end{enumerate}

Other DR graphs yield log-concave sequences that are not so well-known. We only give two examples.

\begin{enumerate}
\item[(4)] the valencies $v_j =q^{j(j-1)} \frac{ \prod_{i=0}^{2j-1}(q^{m-i}-1) }{\prod_{i=1}^{j}(q^{2i}-1)}$ of symplectic form scheme $S(m,q)$ \cite[p.282]{BCN},
\item[(5)] the valencies $v_j= {d \brack d-j}{e \brack e-j}\prod_{i=0}^{j-1}(q^j-q^i)$ of the bilinear form scheme $B(d,e,q)$  \cite[p.280]{BCN}, where ${a \brack b}$ denotes a Gaussian binomial coefficient to the base $q.$
\end{enumerate}
\end{ex}

\begin{rem}
There are obvious generalizations of  Theorem \ref{fonda} to graphs that admit intersection arrays centered in a point. The most famous example is that of distance biregular graphs; bipartite graphs that admit two intersection arrays, the same on each vertex of the same color class \cite{Del}. However, many generalizations of distance regular graphs are not based on intersection arrays, but on polynomials in the adjacency matrix \cite{A}.
\end{rem}

The case $d=2$ of  Theorem \ref{fonda}  is worth of interest in its own right.

{\coro If $G$ is a $(v,k,\lambda,\mu)$ SRG then $$v \ge k+1+\lceil \sqrt{k}\rceil.$$}

\begin{proof}
The inequality $v\le k^2+k+1$ follows by Theorem \ref{fonda} with $v_1=k,$ and $v_2=v-k-1.$
The same method applied to the complementary graph yields $(v-k-1)^2\ge k.$ The inequality follows.
\end{proof}

\begin{rem}The upper bound in the proof is trivial because the so-called Moore bound \cite{HS} implies an upper bound for the number of vertices, that is 
$$v \le 1+k+k(k-1)=k^2+1$$ for $d=2$.
\end{rem}

\begin{ex}
If $\Gamma$ is the pentagon $C_5,$
an SRG on $5$ points of degree $k=2,$ we get
$$ 5\le v\le 5. $$
The lower bound and the Moore bound are met in this case.
\end{ex}
 
{\coro If $C$ is a projective two-weight code over a finite field with weights $w_1$ and $w_2$ then its weight distribution satisfies
$A_{w_1}^2\ge A_{w_2},$ and $A_{w_2}^2\ge A_{w_1}.$}

\begin{proof}
Following Delsarte \cite{D} consider the SRG with vertex set $C$ where two codewords are connected iff they differ by a codeword of weight $w_1.$ We apply Theorem \ref{fonda} with $v_i=A_{w_i},$ for $i=1,2.$
The first inequality follows. The second inequality follows in the same way by considering the complementary graph.
\end{proof}

\begin{rem} We cannot remove the word "projective" from the statement as the next example shows. The binary code obtained as the row span over $\F_2$ of the matrix $G$ below is a $[4,2;{2,3}]_2$ code with weight distribution $(1,1,2).$
$$G=\begin{pmatrix} 1 & 1 &0 & 0\\
0 & 1& 1 & 1
\end{pmatrix}$$
It is not projective as the third and fourth column of $G$  are equal.
\end{rem}

Many examples of low diameter DR graphs can be constructed from completely regular codes \cite{BCN,SS}. These are defined as codes the weight distribution of each coset solely depends on the coset weight.

{\coro Let $C$ be a projective completely regular linear code in $H(n,q).$ Denote by $d_i$ the number of cosets of weight $i.$ Then the $d_i$'s form a log-concave sequence.}

\begin{proof}
As is well-known \cite{BCN,SS}, the coset graph of $C$ is DR of diameter the covering radius of $C$. The valencies of this graph are  the $d_i$ 's. The result follows then by Theorem \ref{fonda}.
\end{proof}

The analogue of  Theorem \ref{fonda} for the $P-Q$ duality described in \S \ref{sec2.3} is given here conditioned on a property of the scheme. Let us
say that a Q-polynomial scheme has Property $M$ (for monotone) if
for $0 \le i \le d-1,$ we have both
\begin{itemize}
 \item 
$ b_i^*\ge b_{i+1}^*, $
\item $ c_i^*\le c_{i+1}^*. $
\end{itemize}

It seems that most Q-polynomial schemes enjoy Property $M$ except certain Johnson schemes \cite{S}.\par

{\thm If $(X,R)$ is a Q-polynomial  association scheme having Property $M$ then its multiplicities $(m_i)$ form a log-concave sequence.}

\begin{proof}
 By \cite[Prop. 3.7, p.67]{BI}, or \cite[Lemma 2.3.1 p.49]{BCN} we know that $\frac{m_i}{m_{i-1}}=\frac{b_{i-1}^*}{c_i^*}.$
 By Property $M$, we complete the proof 
 in a way similar to the proof of Theorem \ref{fonda}.
\end{proof}

\begin{ex}By \cite{WI}, the sequence formed by the multiplicities of Johnson scheme $J(21,3)$ is given by $\{ 20,833/57,11200/1377;1,2380/1539,70/51\}$. The sequence doesn't satisfy Property $M$ as $2380/1539\le70/51$. However, the multiplicities form a log-concave sequence, as the next result shows. 
\end{ex}
A sufficient condition on $(n,d)$ for  the multiplicities of the Johnson scheme $J(n,d)$ to form a log-concave sequence is shown below.
\begin{thm}\label{thm4}For {$d< \frac{{{{n + 1}}}}{2}$}, the sequence formed by multiplicities of Johnson scheme $J(n,d)$ is log-concave.
\end{thm}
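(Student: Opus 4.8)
The plan is to reduce the asserted log-concavity to the monotonicity of consecutive ratios and then to a transparent algebraic identity. First I would invoke the classical closed form for the multiplicities of the Johnson scheme $J(n,d)$, namely
$$ m_i = \binom{n}{i} - \binom{n}{i-1} = \binom{n}{i}\,\frac{n-2i+1}{n-i+1}, \qquad 0 \le i \le d, $$
(see \cite{BCN}); these are the dimensions of the $S_n$-irreducibles indexed by the two-row partitions $(n-i,i)$, and the example $J(21,3)$ with multiplicities $(1,20,189,1120)$ already matches this formula. The hypothesis $d < \frac{n+1}{2}$ guarantees $n-2i+1>0$ for every $i\le d$, so that all the $m_i$ are genuinely positive.

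Since each $m_i>0$, the condition $m_i^2 \ge m_{i-1}m_{i+1}$ is equivalent to the sequence of ratios $g(i):=m_{i+1}/m_i$ being non-increasing on $0 \le i \le d-1$. Using $\binom{n}{i+1}/\binom{n}{i}=(n-i)/(i+1)$ I would compute
$$ g(i) = \frac{(n-2i-1)(n-i+1)}{(i+1)(n-2i+1)}. $$
Under the assumption $d < \frac{n+1}{2}$ one checks that for $1 \le i \le d-1$ all six factors occurring in $g(i)$ and $g(i-1)$ are strictly positive, which legitimizes cross-multiplying the target inequality $g(i) \le g(i-1)$ into the polynomial form
$$ i\,(n-2i-1)(n-i+1)(n-2i+3) \le (i+1)(n-2i+1)^2(n-i+2). $$

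The crux is to collapse this degree-four inequality rather than expand it blindly. I would substitute $s=n-2i$, so that after clearing the common factor $\tfrac14$ the inequality becomes
$$ (n-s)(n+s+2)(s-1)(s+3) \le (n-s+2)(n+s+4)(s+1)^2, $$
and then recognize the difference-of-squares factorizations $(n-s)(n+s+2)=(n+1)^2-(s+1)^2$, $(n-s+2)(n+s+4)=(n+3)^2-(s+1)^2$, and $(s-1)(s+3)=(s+1)^2-4$. Writing $u=(s+1)^2$, $P=(n+1)^2$, $Q=(n+3)^2$, the inequality reads $(P-u)(u-4) \le (Q-u)u$; the quadratic terms $-u^2$ cancel, and using $P-Q=-4(n+1)$ it reduces to $-4(n+1)u \le 4(n+1)^2$, i.e.\ to $u \ge -(n+1)$, which is trivially true since $u=(s+1)^2\ge 0$. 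This shows $g$ is non-increasing and hence proves the claim.

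The main obstacle is precisely this last algebraic simplification: a brute-force expansion of the quartic inequality is unwieldy, and the whole argument hinges on finding the substitution $s=n-2i$ together with the three difference-of-squares groupings that make the $u^2$-terms cancel and expose the trivial residual inequality $u\ge -(n+1)$. Once that collapse is found, the role of the hypothesis $d<\frac{n+1}{2}$ is seen to be solely to keep every factor positive, so that the cross-multiplication performed along the way is valid.
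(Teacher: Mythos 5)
Your proof is correct, but it takes a genuinely different route from the paper's. The paper starts from the same formula $m_i=\binom{n}{i}-\binom{n}{i-1}=\frac{n-2i+1}{n-i+1}\binom{n}{i}$, but then argues \emph{multiplicatively}: it invokes the standard fact that a termwise product of positive log-concave sequences is log-concave, notes that the binomial coefficients $\binom{n}{i}$ are log-concave, and so only has to check the correction factor $m(n,i)=\frac{n-2i+1}{n-i+1}$, which it does via the identity $\frac{m(n,i)^2}{m(n,i-1)m(n,i+1)}-1=\frac{3(n+1)\left(n-\frac{4}{3}i+1\right)}{(n-2i-1)(n-2i+3)(n-i+1)^2}\ge 0$ on the relevant range. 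You instead work directly with the consecutive ratios $g(i)=m_{i+1}/m_i$, cross-multiply (legitimately, since the hypothesis $d<\frac{n+1}{2}$ keeps every factor positive), and collapse the resulting quartic inequality via the substitution $s=n-2i$ and the three difference-of-squares groupings, reducing everything to the trivially true $u=(s+1)^2\ge -(n+1)$. Your version is self-contained (no product lemma, no separate treatment of the binomial part) and makes the slack and the role of the hypothesis transparent; the paper's version is more modular, and its factor-by-factor template is exactly what it reuses for the folded Johnson graph, the Odd graph and the Grassmann scheme. One slip to correct: you write $P-Q=-4(n+1)$, but $(n+1)^2-(n+3)^2=-4(n+2)$; the true identity is $P-Q+4=-4(n+1)$, and that is precisely the coefficient in your displayed reduction $-4(n+1)u\le 4(n+1)^2$, so your chain of inequalities is the right one and the conclusion stands (even taking the mislabeled value at face value, the residual inequality $-4nu\le 4(n+1)^2$ would still hold trivially).
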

\begin{proof}According to \cite{DA}, the multiplicity $m_i$ of $J(n,d)$ is given below
\[{m_i} = \left( {\begin{array}{*{20}{c}}
n\\
i
\end{array}} \right) - \left( {\begin{array}{*{20}{c}}
n\\
{i - 1}
\end{array}} \right) = \frac{{n - 2i + 1}}{{n - i + 1}}\left( {\begin{array}{*{20}{c}}
n\\
i
\end{array}} \right).\]
Note that the sequence $(a_{i}b_i)$ is log-concave if the corresponding two sequences $(a_{i})$ and $(b_i)$ are log-concave. Note that ${\left( {\begin{array}{*{20}{c}}
n\\
i
\end{array}} \right)^2} \ge \left( {\begin{array}{*{20}{c}}
n\\
{i - 1}
\end{array}} \right)\left( {\begin{array}{*{20}{c}}
n\\
{i + 1}
\end{array}} \right)$ for $i = 2, \cdots ,d-1,$
directly or as an application of Theorem \ref{fonda}. Let $m\left( {n,i} \right) = \frac{{n - 2i + 1}}{{n - i + 1}}$, we can deduce that \[\frac{{m{{(n,i)}^2}}}{{m(n,i - 1)m(n,i + 1)}} - 1 = \frac{{3(n + 1)({{n  - }}\frac{4}{3}{{i  +  1}})}}{{{{(n  -  2i  -  1)(n  -  2i  +  3)(n  -  i  +  1}}{{{)}}^2}}}\ge 0 \] as $1< i\le {d-1< \frac{{{{n - 1}}}}{2}}$. Then the sequence $(m(n,i))$ is log-concave with respect to $i$ and the result follows.
\end{proof}
{By \cite{BCN}, several graphs related to Johnson graphs also have similar results.}
{\begin{coro}If $m \ge 4$ and $d=[\frac{m}{2}]$, the sequence formed by multiplicities of the folded Johnson graph $\overline{J}(2m,m)$ is log-concave.
\end{coro}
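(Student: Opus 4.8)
The plan is to reduce the claim to Theorem \ref{thm4} through the spectral description of the folded Johnson graph as an antipodal quotient. First I would recall from \cite{BCN} that $\overline{J}(2m,m)$ is obtained from the Johnson graph $J(2m,m)$ by identifying each $m$-subset $S$ with its complement $\overline{S}$, the unique vertex at maximal distance $m$; thus $J(2m,m)$ is antipodal with fibres of size two and $\overline{J}(2m,m)$ is its antipodal quotient, of diameter $\lfloor m/2\rfloor = d$. Complementation acts on the $j$-th eigenspace of $J(2m,m)$ by the scalar $(-1)^j$, so exactly the even-indexed eigenvalues survive in the quotient. Hence the multiplicities of $\overline{J}(2m,m)$ are
$$\overline{m}_i = m_{2i} = \binom{2m}{2i} - \binom{2m}{2i-1}, \qquad 0 \le i \le d,$$
where $m_j = \binom{2m}{j} - \binom{2m}{j-1}$ denotes the $j$-th multiplicity of $J(2m,m)$.

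Next I would apply Theorem \ref{thm4} to $J(2m,m)$ itself. Its hypothesis $d < (n+1)/2$ becomes, with $n = 2m$ and full diameter $m$, the inequality $m < (2m+1)/2$, which holds unconditionally; therefore the entire sequence $(m_j)_{j=0}^{m}$ is log-concave. It then remains to pass from $(m_j)$ to the even-indexed subsequence $(\overline{m}_i) = (m_{2i})$. For this I would use the elementary observation that a positive log-concave sequence remains log-concave when sampled along an arithmetic progression: writing $r_j = m_j/m_{j-1}$, log-concavity of $(m_j)$ says that the $r_j$ are non-increasing, and since $\overline{m}_i/\overline{m}_{i-1} = r_{2i}\,r_{2i-1}$ with $r_{2i-1}\ge r_{2i+1}$ and $r_{2i}\ge r_{2i+2}$, the successive ratios of $(\overline{m}_i)$ are non-increasing as well. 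This is precisely log-concavity of $(\overline{m}_i)$, and the hypothesis $m \ge 4$ only serves to make $d = \lfloor m/2\rfloor \ge 2$, so that the statement is non-trivial.

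The step I expect to require the most care is the first one, namely justifying that the surviving multiplicities are exactly the even-indexed Johnson multiplicities; rather than recomputing the eigenspaces by hand I would cite the antipodal-quotient theory together with the spectral data for folded Johnson graphs in \cite{BCN}. Once that identification is in place, the remaining two steps are routine: Theorem \ref{thm4} supplies log-concavity of the ambient Johnson sequence, and the arithmetic-progression sampling lemma transfers it to the folded multiplicities.
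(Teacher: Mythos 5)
Your proof is correct, but it follows a genuinely different route from the paper's. The paper starts from the same closed formula $\overline{m}_i=\binom{2m}{2i}-\binom{2m}{2i-1}$, factors it as $\frac{2m-4i+1}{2m-2i+1}\binom{2m}{2i}$, argues that each factor is log-concave (the first by appealing to the computation inside the proof of Theorem \ref{thm4}, the second by a direct ratio computation), and concludes via the rule that a termwise product of log-concave sequences is log-concave. You instead apply Theorem \ref{thm4} as a black box to the ambient Johnson scheme $J(2m,m)$ --- exploiting the pleasant fact that its hypothesis $d<(n+1)/2$ becomes $m<m+\frac{1}{2}$ and so holds automatically --- and then transfer log-concavity to the even-indexed subsequence by the elementary lemma that a positive log-concave sequence sampled along an arithmetic progression remains log-concave (non-increasing ratios $r_j$ give non-increasing products $r_{2i}r_{2i-1}$). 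Your route is more modular: it needs no new computation, it explains the role of the hypothesis $m\ge 4$ (mere non-triviality, $d\ge 2$), and it even repairs a small imprecision in the paper, since the paper's first factor $\frac{2m-4i+1}{2m-2i+1}$ is exactly the sequence $m(n,j)$ of Theorem \ref{thm4} with $n=2m$ sampled at even indices $j=2i$, so strictly speaking the paper also needs your sampling observation (or a redone computation) at that step. What the paper's factorization buys is self-containedness at the level of formulas: it never invokes the spectral fact that the folded multiplicities are the even-indexed Johnson multiplicities of the antipodal quotient, which your first step justifies via \cite{BCN} and the paper simply asserts as a formula. Both arguments ultimately rest on the same identification $\overline{m}_i=m_{2i}$ and on the groundwork laid in Theorem \ref{thm4}.
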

\begin{proof}The multiplicity $m_i$ is given by \[{m_i} = \left( {\begin{array}{*{20}{c}}
2m\\
2i
\end{array}} \right) - \left( {\begin{array}{*{20}{c}}
2m\\
{2i - 1}
\end{array}} \right) = \frac{{2m - 4i + 1}}{{2m - 2i + 1}}\left( {\begin{array}{*{20}{c}}
2m\\
2i
\end{array}} \right).\] Note that the sequence $(\frac{{2m - 4i + 1}}{{2m - 2i + 1}})$  is log-concave if $d=[\frac{m}{2}]$ by the computation in the proof of Theorem \ref{thm4}. Let $v_i={ 2m\choose 2i}$, then $(v_i)$ is log-concave since \[\frac{{v_i^2}}{{{v_{i - 1}}{v_{i + 1}}}} = \frac{{(2m - 2i + 2)(2m - 2i + 1)(2i + 2)(2i + 1)}}{{(2m - 2i)(2m - 2i - 1)(2i)(2i - 1)}} > 1.\]
Then the result follows.
\end{proof}}

{\begin{coro}The sequence formed by multiplicities of the Odd graph $\mathcal{O}_{m+1}$ is log-concave.
\end{coro}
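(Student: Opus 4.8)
The plan is to reduce the statement to Theorem \ref{thm4} by identifying the spectral data of the Odd graph with that of a Johnson scheme. Recall that $\mathcal{O}_{m+1}$ has as its vertices the $m$-subsets of a $(2m+1)$-set, two such subsets being adjacent iff they are disjoint; this is a distance-regular graph of diameter $m$. Its vertex set is precisely that of the Johnson scheme $J(2m+1,m)$, and its adjacency relation (disjointness) is the top distance relation of that scheme. Consequently $\mathcal{O}_{m+1}$ and $J(2m+1,m)$ generate the same Bose-Mesner algebra and share the same primitive idempotents $E_i$, so their eigenspace dimensions coincide.

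The first step, therefore, is to record (citing \cite{BCN}) that the multiplicities of $\mathcal{O}_{m+1}$ are
$$ m_i=\binom{2m+1}{i}-\binom{2m+1}{i-1}, \qquad i=0,1,\dots,m, $$
that is, exactly the multiplicities of $J(2m+1,m)$ written down in the proof of Theorem \ref{thm4}. Unlike the folded Johnson graph, where the antipodal quotient kills the odd-indexed idempotents and only the even-indexed multiplicities survive, here no folding occurs, so the whole range $i=0,\dots,m$ is present and matches the $m$ classes of $J(2m+1,m)$.

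The second step is to invoke Theorem \ref{thm4} directly. Its hypothesis is $d<\frac{n+1}{2}$; setting $n=2m+1$ and $d=m$ gives $\frac{n+1}{2}=m+1$, so the condition reads $m<m+1$, which holds for every integer $m$. Hence Theorem \ref{thm4} applies to $J(2m+1,m)$ verbatim, and the sequence $(m_i)_{i=0}^{m}$ is log-concave. Since these are the same numbers as the multiplicities of $\mathcal{O}_{m+1}$, the corollary follows.

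I expect no genuine obstacle here: the entire argument is a reduction, and the only point requiring care is the bookkeeping in the first step, namely confirming that the Odd graph shares the full idempotent structure of $J(2m+1,m)$ (so that all $m+1$ multiplicities appear, indexed as above) and that the diameter $m$ equals the number of classes. Once this standard spectral identification is in place, the strict inequality $m<m+1$ lands safely inside the admissible range of Theorem \ref{thm4}, and log-concavity is immediate.
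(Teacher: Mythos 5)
Your proof is correct and follows the same route as the paper: write down the multiplicity sequence of $\mathcal{O}_{m+1}$ and feed it into Theorem \ref{thm4}. In fact, your bookkeeping is more accurate than the paper's own. The paper states, citing \cite{BCN}, that $m_i=\binom{2m}{i}-\binom{2m}{i-1}$, whereas you identify $\mathcal{O}_{m+1}$ as the Kneser graph on $m$-subsets of a $(2m+1)$-set, sharing the Bose--Mesner algebra (hence the idempotents and their ranks) of $J(2m+1,m)$, and obtain $m_i=\binom{2m+1}{i}-\binom{2m+1}{i-1}$ for $i=0,\dots,m$. Your formula is the correct one: for $m=2$ the Odd graph is the Petersen graph, with spectrum $3^1,1^5,(-2)^4$ and multiplicities $(1,4,5)=\bigl(\tbinom{5}{i}-\tbinom{5}{i-1}\bigr)_{i=0,1,2}$, while the paper's formula gives $(1,3,2)$, which does not even sum to the number of vertices $\binom{5}{2}=10$. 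The discrepancy is harmless for the conclusion, since Theorem \ref{thm4} applies in either reading (with $n=2m$, resp.\ $n=2m+1$, and $d=m$, both satisfying $d<\frac{n+1}{2}$), but your reduction, including the check $m<m+1$, is the one that rests on the correct spectral data.
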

\begin{proof}According to \cite{BCN}, $d=m$ and $m_i={ 2m\choose i}-{ 2m\choose i-1}$. By Theorem \ref{thm4}, ${m_{i}}^2\ge{m_{i-1}}{m_{i+1}}$ when $1< i \le d-1<\frac{2m-1}{2}$. Then the result follows.
\end{proof}}

A sufficient condition on $(n,d)$ for  the multiplicities of the Grassman scheme $Gr(n,d)$ to form a log-concave sequence is shown below. This is a $q$-analogue of the previous result.
\begin{thm}For ${d < \frac{{{{n + 1}}}}{2}}$, the sequence formed by multiplicities of Grassman scheme $Gr(n,d)$ is log-concave.
\end{thm}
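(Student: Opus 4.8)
The plan is to mimic the proof of Theorem \ref{thm4}, replacing each ordinary binomial coefficient by its Gaussian analogue and each integer by the corresponding $q$-number. According to \cite{BCN}, the multiplicities of $Gr(n,d)$ are the $q$-analogue of the Johnson multiplicities, namely
$$m_i={n \brack i}-{n \brack i-1}=\frac{q^{n-i+1}-q^i}{q^{n-i+1}-1}{n \brack i},\qquad 0\le i\le d,$$
where the second equality uses ${n \brack i-1}=\frac{q^i-1}{q^{n-i+1}-1}{n \brack i}$. I would then factor the rational prefactor further as
$$w(n,i):=\frac{q^{n-i+1}-q^i}{q^{n-i+1}-1}=q^{\,i}\,\frac{q^{n-2i+1}-1}{q^{n-i+1}-1},$$
so that $m_i$ is a termwise product of three positive sequences: the Gaussian binomial ${n \brack i}$, the geometric sequence $q^{\,i}$, and $g(i):=\frac{q^{n-2i+1}-1}{q^{n-i+1}-1}$. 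Since a product of positive log-concave sequences is log-concave (as already used in the proof of Theorem \ref{thm4}), it suffices to prove each factor is log-concave.

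The geometric factor $q^{\,i}$ is log-concave with equality. For the Gaussian binomials I would check ${n\brack i}^2\ge{n\brack i-1}{n\brack i+1}$ directly: using ${n\brack i}/{n\brack i-1}=(q^{n-i+1}-1)/(q^i-1)$, the inequality reduces to $(q^{n-i+1}-1)(q^{i+1}-1)\ge(q^i-1)(q^{n-i}-1)$, whose difference factors as $(q-1)\bigl(q^{n+1}+q^n-q^{n-i}-q^i\bigr)\ge 0$ for $q>1$ and $0\le i\le n$.

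The remaining and main point is log-concavity of $g(i)$, and this is where the hypothesis $d<\frac{n+1}{2}$ enters. Writing $\log g(i)=\phi(n+1-2i)-\phi(n+1-i)$ with $\phi(x)=\log(q^{x}-1)$, I would treat $i$ as a real variable and compute $\phi''(x)=-(\log q)^2\frac{q^{x}}{(q^{x}-1)^2}<0$, which is negative with $|\phi''(x)|$ strictly decreasing in $x$ (substitute $y=q^x$ and note $\frac{d}{dy}\frac{y}{(y-1)^2}=\frac{-(y+1)}{(y-1)^3}<0$). The chain rule gives $\frac{d^2}{di^2}\log g(i)=4\,\phi''(n+1-2i)-\phi''(n+1-i)$, and since $n+1-2i<n+1-i$ forces $|\phi''(n+1-2i)|\ge|\phi''(n+1-i)|$, the factor $4$ only strengthens the bound and this second derivative is $\le 0$. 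Concavity of $\log g$ on the relevant interval then yields the midpoint inequality $g(i)^2\ge g(i-1)g(i+1)$, which is discrete log-concavity. The role of $d<\frac{n+1}{2}$ is exactly to guarantee $n+1-2i>0$ for every interior index (the binding case being $n-2i-1>0$ at index $i+1\le d$), so that $\phi$ is defined and $m_i>0$ throughout; this is the $q$-analogue of the constraint $i<\frac{n-1}{2}$ in Theorem \ref{thm4}.

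The hardest part is this middle factor $g(i)$: one must be sure the convexity estimate is applied on an interval where $q^{\,n+1-2i}>1$, which is precisely the content of the diameter bound, and check that the factor $4$ arising from the exponent $2i$ does not reverse the inequality. An entirely algebraic alternative, closer to the displayed computation in Theorem \ref{thm4}, is to clear denominators and, after the substitution $u=q^{n-2i+1}$, $v=q^{n-i+1}$ (so $v=uq^{\,i}$), reduce log-concavity of $g$ to $(u-1)^2(v-q)(v-q^{-1})\ge(v-1)^2(u-q^2)(u-q^{-2})$, all factors being positive once $d<\frac{n+1}{2}$; I would expect this route to be more tedious but to give the same conclusion.
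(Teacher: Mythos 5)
Your proof is correct, and it diverges from the paper's at exactly the step that matters. The skeleton is shared: the paper also writes $m_i$ as ${n \brack i}$ times the correction factor $u_i = 1-\frac{q^{i}-1}{q^{n-i+1}-1}$ (identical to your $w(n,i)$) and invokes closure of log-concavity under termwise products, citing Stanley for the log-concavity of the Gaussian binomials where you give a short direct proof (your factorization of the difference as $(q-1)\bigl(q^{n+1}+q^n-q^{n-i}-q^i\bigr)\ge 0$ is right). The genuine difference is the treatment of the correction factor, which is the crux of the theorem. The paper computes $\frac{u_j^2}{u_{j-1}u_{j+1}}-1$ with Wolfram alpha as an explicit rational function and then runs a sign analysis on the CAS-generated numerator polynomial $A$, using $2j\le 2(d-1)<n-1$ to control signs. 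You instead split $w(n,i)=q^{i}g(i)$ with $g(i)=\frac{q^{n-2i+1}-1}{q^{n-i+1}-1}$, discard the geometric factor, and prove concavity of $\log g$ in a real variable: the computations $\phi''(x)=-(\log q)^2\,q^x/(q^x-1)^2<0$, the monotone decrease of $|\phi''|$, and the chain-rule identity $\frac{d^2}{di^2}\log g(i)=4\phi''(n+1-2i)-\phi''(n+1-i)\le 0$ all check out, and midpoint concavity does yield $g(i)^2\ge g(i-1)g(i+1)$ at integer points. Your accounting of where $d<\frac{n+1}{2}$ enters (positivity of $q^{n+1-2(i+1)}-1$ for the largest interior index, $i+1\le d$) matches precisely the paper's use of the hypothesis. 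What your route buys: a hand-checkable, computer-algebra-free argument in which the role of the diameter bound is transparent. What the paper's route buys: an explicit closed-form identity for the log-concavity defect of $(u_j)$, at the price of an opaque polynomial whose sign analysis the reader must trust. Your closing ``algebraic alternative'' is only a sketch, but nothing rests on it, since the calculus argument is complete on its own.
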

\begin{proof}
The multiplicities of the Grassman scheme are
$$m_i={ n\brack i}-{ n\brack i-1}=(1-\frac{q^{i}-1}{q^{n-i+1}-1}){ n\brack i},$$
where ${ \brack }$ denotes the Gaussian binomial to the base $q.$
 According to \cite[p.501]{Sta} the sequence formed by Gaussian binomials is log-concave.
 By the same argument as in the proof of the previous theorem
 we are reduced to check that
 $u_j=1-\frac{q^{j}-1}{q^{n-j+1}-1}$ is log-concave.
 A computation in Wolfram alpha shows that
 $$\frac{u_j^2}{u_{j-1}u_{j+1}}-1=\frac{(q - 1)^2 q^{j+n}A}{(q^j- q^{n+1})^2 (-q^{2j + n} - q^{2j + n + 4} + q^{4j+1} + q^{2 n + 3})},$$
where $A=q^{j+2n+2}+2q^{j+2n+3}+q^{j+2n+4}-2q^{2j+n+1}-2q^{2j+n+2}-2q^{2j+n+3}+2q^{3j+1}+q^{3j+2}-q^{4j+1}+q^{3j}-q^{2n+3}$. 
As {$2j\le 2(d-1)<n-1$}, we have
\[-q^{2j + n} - q^{2j + n + 4} + q^{4j+1} + q^{2 n + 3}=(q^{n+3}-q^{2j})(q^{n}-q^{2j+1})>0.\]
Note that the sign of $\frac{u_j^2}{u_{j-1}u_{j+1}}-1$ is the same as the sign of $A$.
As $q \ge 2$ and $1< j < n$, it's easy to check that $2{q^{2j + n + 1}} \le {q^{2j + n + 2}} < {q^{j + 2n + 2}}$. Then for $1< j < \frac{{{{n - 1}}}}{2}$, we have
\[A > {q^{2n + 3}}({q^j} - 1) - {q^{4j + 1}} + 2{q^{3j + 1}} + {q^{3j}}>0. \]
Therefore, $(u_j)$ is log-concave when $1<j\le d-1<\frac{n-1}{2}$.
\end{proof}

\begin{rem}
For the results obtained in this section, we give the following remarks:
\begin{enumerate}
 \item[(1)] Since both $H(n,q)$ and the scheme of bilinear forms are self-dual, as translation schemes \cite[\S 2.10]{BCN}, we recover points (1),(2) and (5) of Example 2 above.
 
 \item[(2)] The unimodality of the multiplicities of
 a Q-polynomial association scheme has been an open problem since the 1980's \cite[p.205]{BI}.
 \item[(3)] It would be a worthy research project to characterize $Q$-polynomial schemes enjoying Property $M$.
\end{enumerate}
\end{rem}

\section{Conclusion}\label{sec5}
In this note, we have shown that the series of valencies of a distance regular graph, and the series of multiplicities of certain $Q$-polynomial association schemes are log-concave. The first result is well-known since \cite[Proposition 1.4, p.205]{BI}, but the consequences on strongly regular graphs, and two weight codes are worth exploring. The second result approaches a problem open since the 1980's \cite[Conjecture 1, p.205]{BI} .
The log-concavity of the valencies cannot be obtained for general graphs as Section \ref{sec3} shows, even in the restricted class of distance degree regular graphs \cite{DDR}. Therefore, it is a challenging open problem to find a larger class of graphs where they could be extended.
Similarly, it would be interesting to find a larger class of codes than completely regular codes for which the series of coset weights is log-concave.\\

\noindent {\bf Acknowledgement}\\\\ The third author thanks
Sacha Gavrilyuk, Akihiro Munemasa, and Sho Suda for helpful discussions.\\

\noindent {\bf Data availability} \\\\Any data generated is available upon request from the authors.\\

\noindent {\bf Declaration} \\\\No AI program was used to write this manuscript.\\


\begin{thebibliography}{00}
\bibitem{BI} E. Bannai, T. Ito, {\it Algebraic Combinatorics I: Association Schemes}, (1984).
\bibitem{B}E. Berlekamp, {\it Algebraic Coding Theory}, Aegean Park Press (1984).
\bibitem{BCN} A.E. Brouwer, A. Cohen, A. Neumaier, {\it Distance regular graphs}, Springer (1989).
\bibitem{BM} A.E. Brouwer, H. Van Maldeghem, {\it Strongly regular graphs}, Springer (2022).
\bibitem{A} C. Dalf\'o, E.R. Van Dam, M.A. Fiol, E. Garriga, B.L. Gorissen, On almost distance regular graphs, Journal of Combinatorial Theory, Series A
{\bf 118}, (3) (2011) 1094--1113.
\bibitem{DKT} E.V. Dam, J.H. Koolen, H. Tanaka, Distance regular graphs, Electronic Journal of Combinatorics (2016) \#DS22
\bibitem{Del} C. Delorme, Distance biregular bipartite graphs, European Journal of Combinatorics, {\bf 15} (1994) 223--228.
\bibitem{D} P. Delsarte, Weights of linear codes and strongly regular normed spaces, Discr. Math {\bf 3}, Issues 1–3, (1972) 47--64.
\bibitem{DA} P. Delsarte, {\it An algebraic approach to the association schemes of coding theory}, Philips Res. Rep. Suppl., 10 (1973).
\bibitem{HS} A.J. Hoffman, R.R. Singleton, On Moore Graphs of Diameter 2 and 3, IBM J. Res. Develop., {\bf 4} (5) (1960) 497--504.
\bibitem{DDR} M.I. Huilgol, Distance Degree Regular Graphs and Distance Degree Injective Graphs: An Overview, Journal of Discrete Math (2014).  
\bibitem{L}C.Y. Lee, Some properties of nonbinary error-correcting codes, IRE Transactions on Information Theory, {\bf 4} (2) (1958) 77--82.
\bibitem{OR}A.M. Odlyzko, L.B. Richmond, On the unimodality of high convolutions of discrete distributions, Ann. Probab.,
{\bf 13} (1) (1985) 299--306.
\bibitem{AP}A. Prékopa, Logconcavity of Discrete Distributions, In: Floudas, C.A., Pardalos, P.M. (eds), Encyclopedia of Optimization, Springer (2001), Boston, MA.
\bibitem{SS} M. Shi, P. Sol\'e, eds, {\it Completely regular codes in distance regular graphs}, CRC Press (2025).
\bibitem{Sta} R.P. Stanley, Log-concave and unimodal sequences in algebra, combinatorics, and geometry, Ann. New York Acad. Sci., 576 (1989) 500--534.
\bibitem{S} Sho Suda, private communication.
\bibitem{TV} T. Tao, V. Vu, Random matrices have simple spectrum, Combinatorica  {\bf 37} (2017) 539--553.
\bibitem{TL} D.E. Taylor, R. Levingston, Distance regular graphs, pp. 313-323  in {\it Combinatorial Mathematics}, D.A. Holton, Jennifer Seberry eds, Springer LNM (686) (1978).
\bibitem{WI}https://www.uwyo.edu/jwilliford/index.html
\end{thebibliography}
\end{document}